\newtheorem{theorem}{Theorem}
\newtheorem{lemma}{Lemma}
\newtheorem{proposition}{Proposition}
\theoremstyle{definition}
\newtheorem{definition}{Definition}
\theoremstyle{remark}
\newtheorem{corollary}{Corollary}
\newtheorem{remark}{Remark}
\begin{document}

\nocite{*}

\title{Gradient flow on Finsler manifolds}

\author{N. Shojaee, M. M Rezaii\thanks{The corresponding author} \\ 
Department of  Mathematics and Computure science\\
Amirkabir University (Polytechnic of Tehran)\\
Tehran, Iran}

\maketitle
\begin{abstract}
studying various functionals and associated gradient flows are known problems in differential geometry.
The perpose of this article is to provide a general overview of curvature functionals in Finsler geometry and use their  information for 
introducing gradient flow on Finsler manifolds. For aiming this purpose, at first we prove space of Finslerian metrics is a Riemannian manifold, then we give some decompositions of tangent space of this manifold and finally we introduce gradient flow by useing Akbar Zadeh curvature functional.\\
\it{Keywords:} Finsler manifold, Curvature functional, Ricci directional curvature, Berger-Ebin decomposition\\
2010 MSC: 53B40, 58B20, 58E11
\end{abstract}
\section{Introduction}
Nonlinear heat flows first appeared in Riemannian geometry in 1964, when Eells and Sampson introduced the harmonic map heat flow as the gradient flow of the energy functional $E(u)=\int_{M}|\nabla u|^2dV$ \cite{ES}. They used this flow as a tool to deform given maps $u:M\rightarrow N$ between two manifolds into extremal maps which are critical points in the sense of calculuse of varitions for $E(u)$.\\
A fundamental problem in differential geometry is to find canonical metrics on Riemannian manifolds, i.e. metrics which are highly symmetrical, for example metrics with constant curvature in some sense. Hamilton used the idea of evolving an object to such an ideal state by a nonlinear heat flow for the first time and invented the Ricci flow in 1981 \cite{HA}. He proved that a Riemannian metric of strictly positive Ricci curvature on a compact 3-manifold can be deformed into a metric of positive constant curvature, using this idea G. Huisken \cite{HU}, C. Margerin \cite{MA} and S. Nishikawa \cite{NI} proved that on a compact n-manifold, a Riemannian metric can be deformed into a metric of constant curvature, if it is sufficiently close to a metric of positive constant curvature \cite{V}.\\
The stationary metrics under the Ricci flow are Ricci flat metrics which are also the critical points of the Einstein -Hilbert functional ${\cal E}(g)=\int_MRdV$ but the Ricci flow is not exactly the gradient flow of functional ${\cal E}(g)=\int_MRdV$, it is just a part of Einstein -Hilbert functional's gradient flow, $\partial_tg_{ij}=-R_{ij}+\frac{R}{n}g_{ij}$.  If this functional is restricted to the class of conformal metrics then it has a strictly parabolic gradient flow which is called Yamabe flow. Hamilton proved that there is not any functional such that its gradient flow is Ricci flow. Perelman improved Einstien-Hilbert functional and introduced ${\cal F}(g_{ij},f)=\int_M(|\nabla f|^2+R)e^{-f}dV$,this functional has a system of including two gradient flows which one of them is Ricci flow \cite{M}.\\
Since gradient flows has important role in global analysis on manifolds and in different branches of application sciences like image processing, biological problems and ..., it is an important branch of studying. So it might be good to extend this topic on Finslerian manifolds. For the first attempt in this topic, we can mention the concept of Ricci flow by Bao \cite{B1} and as a bit more serios studying, Ohta and Sturm introduced heat flows on Finsler manifolds \cite{OS}.\\
Since the classification of Finslerian manifolds with constant curvature is incomplete and furthermore, there are different kinds of curvature in Finsler geometry, we can not assure are gradient flows benefit tools for studying on Finsler manifolds like Riemannian ones?\\
The main object of this paper is to introduced gradient flow of curvature functionals in Finsler geometry. For aiming this purpose, we start with studying space of Finsler metrics ${\cal M}_F$ and then we produced some decompositions for tangent space of ${\cal M}_F$ and in the last part of our paper we give an exact definition of variations of Finslerian metrics and using the calculuse  of variations for deriving some gradient flows.
\section{Preliminaries}
Let $(M,g)$ be a connected, compact Finsler manifold. It means that there is a function $F$ on tangent bundle $TM$ with the following conditions:
\begin{itemize}
\item $F$ is a smooth function on the entire slit tangent bundle $TM_o$.
\item $F$ is a positive homogenuouse function on second variable, $y$.
\item The matrix $(g_{ij})$, $g_{ij}(x,y)=\frac{1}{2}\frac{\partial^2F^2}{\partial y^i\partial y^j}$ is nondegenerate.
\end{itemize}
The geodesics of a Finsler structure $F$ are characterized locally by $\frac{d^2x^i}{dt^2}+2G^i(x,\frac{dx}{dt})=0$ where  
$G^i=\frac{1}{4}g^{ih}(\frac{\partial^2 F^2}{\partial y^h\partial x^j}y^j-\frac{\partial F^2}{\partial x^h})$
and called geodesic spray coefficients.
Set $G^i_j=\frac{\partial G^i}{\partial y^j}$ which are the coefficients of nonlinear connection on $TM$. By means of this nonlinear connection, tangent space  of $TM_o$ splits in two horizontal and vertical subspaces, which is spanned by $\{\frac{\delta}{\delta x^i},\frac{\partial}{\partial y^i}\}$, where $\frac{\delta}{\delta x^i}:=\frac{\partial}{\partial x^i}-G_i^j\frac{\partial}{\partial y^j}$ that are called Berwald bases and their dual bases are denoted by $\{dx^i,\delta y^i\}$, where $\delta y^i:=dy^i+G_j^idx^j$. Furthermore this nonlinear connection can be used to define a linear connection which is called Berwald connection and its one forms defined locally by $\pi^i_j=G^i_{jk}dx^k$ where $G^i_{jk}=\frac{\partial G^i_j}{\partial y^k}$.
The one forms of cartan connection are defined by $\nabla\frac{\partial}{\partial x^i}=\omega_i^j\frac{\partial}{\partial x^j}$, where for $\Gamma_{jk}^i=\frac{1}{2}g^{im}(\frac{\partial g_{mj}}{\partial x^k}+\frac{\partial g_{mk}}{\partial x^j}-\frac{\partial g_{kj}}{\partial x^m})-(C^i_{js}G^s_k+C^i_{ks}G^s_j-C_{kjs}G^{si})$ and $C^i_{jk}=\frac{1}{2}g^{im}(\frac{\partial g_{mj}}{\partial y^k}+\frac{\partial g_{mk}}{\partial y^j}-\frac{\partial g_{kj}}{\partial y^m})$, $\omega_j^i=\Gamma^i_{jk}dx^k+C^i_{jk}\delta y^k$.Now, by using definition of curvature tensor on Riemannian vector bundle $\pi^*TM$, hh-curvature of Cartan and Berwald connections are related by \cite{A2},
$$R^{~i}_{j~kl}= H^{~i}_{j~kl}+C^{i}_{jr}R^{~r}_{o~kl}+\nabla_k\nabla_oC^{i}_{jk}-\nabla_k\nabla_o C^{i}_{jl}+\nabla_oC^{i}_{lr}\nabla_oC^{r}_{ji}-\nabla_oC^i_{kr}\nabla_oC^{r}_{jl}$$
Indicatrix bundle of a Finsler structure is defined by $SM:=\mathop{\cup}\limits_{x\in M} S_xM$, where $ S_x=\{y\in T_xM|F(x,y)=1\}$, according to this definition, $S_xM$ is the hypersurface
in $T_xM$. Indicatrix bundle, $SM$ is always orientable and since we assume $M$ is compact, $SM$ is compact, too. These two properties help to define volume form and global inner 
product on $SM$. The volume element of the indicatrix bundle is denoted by $(2n-1)$-form $\eta$ \cite{A2},
$$\eta:=\frac{(-1)^N}{(n-1)}\phi,\quad \phi=\omega\wedge(d\omega)^{(n-1)},\quad N=\frac{n(n-1)}{2}$$
where $\omega$ is Hilbert form. On tensor spaces on $SM$, we have the canonical scalar product (point-wise) $<.|.>$ and on their sections, the global scalar product $(.|.)=\int_{SM}<.|.>\eta$.
Furthermore, Akbarzadeh introduced the codifferential operator on the differentiable one forms defined on $SM$ in \cite{A2},
\begin{equation}
\delta a=-(\nabla^ja_j-a_j\nabla_0T^j)
\end{equation}
where $a$ is a horizontal 1-form on $SM$. And,
\begin{equation}
\delta b=-F(\dot{\nabla}_jb^j+b_jT^j)=-Fg^{ij}\partial_jb_i
\end{equation}
where $b$ is a vertical 1-form on $SM$, $\nabla$ and $\dot{\nabla}$ are horizontal and vertical coeffiecients of Finslerian(Cartan) connection $\tilde{\nabla}=\nabla+\dot{\nabla}$, respectively.
The Ricci tensor is introduced from different ways in Finsler geometry, in this paper we consider Akbarzade definition ${\tilde H}_{ij}=\frac{\partial^2H_{rs}}{\partial y^i\partial y^j}y^ry^s$ where $H_{ij}=y^kH_{ikjs}y^s$ which is defined by hh-curvature. Another curvature which is defined by hh-curvature is Ricci-directional curvature, $H(u,u)=g^{ik}H_{ijkl}u^ju^l$. This curvature is related to second type scalar curvature $\tilde{H}=g^{ij}\tilde{H}_{ij}$, in the critical points of Akbarzadeh curvature functionals.
\section{Space of Finsler metrics}
Endow compact manifold $M$, with a fixed Finslerian structure $F$ that making it a Finslerian manifold $(M,g)$. Since all coefficients of a Finslerian metric are zero homognuous, it is sufficient to consider them on $SM$. So without loss of generality, we can write $g\in S^2_+(\pi^*_sT^*M)$ where $\pi_s:SM\rightarrow M$ and $S^2_+$ denotes space of positive definite two forms. We note that Finsler metrics are special case of the GL-metrics on $TM$. In other words, a GL-metric $g_{ij}(x,y)$ is reducible to a Finsler metric if and only if the vertical coefficients of the cartan connection $C_{ijk}$, satisfied $C_{ijk}y^j=0$ \cite{BR},
but according to the Euler theorem and definition of cartan tensor, this property is always true when coefficients, $C_{ijk}$ are symmetric in all three indices.\\
\begin{proposition}
The space of all Finslerian metrics on a compact manifold $M$ is a Riemannian manifold \cite{}.
\end{proposition}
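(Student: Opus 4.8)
The plan is to imitate, in the Finslerian setting, the classical Berger--Ebin description of the space of Riemannian metrics: realize $\mathcal{M}_F$ as an open subset of a topological vector space and then equip it with the global scalar product $(\,\cdot\mid\cdot\,)$ introduced in the Preliminaries. Since an open subset of a topological vector space is automatically a smooth (infinite-dimensional) manifold modeled on that space, the manifold structure comes essentially for free once openness is checked, and the remaining content is to exhibit a genuine Riemannian metric on it. First I would fix the ambient space. As observed just before the statement, a Finslerian metric is a $0$-homogeneous symmetric section $g_{ij}(x,y)$ over $SM$ whose Cartan tensor $C_{ijk}$ is totally symmetric (equivalently satisfies the reducibility condition $C_{ijk}y^j=0$). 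Let $V$ denote the space of all smooth symmetric sections of $S^2\pi_s^*T^*M$ subject to these structural constraints; $V$ is a closed linear subspace of $\Gamma(S^2\pi_s^*T^*M)$ and hence itself a topological vector space (Fr\'echet in the $C^\infty$ topology, or Banach/Hilbert after an appropriate Sobolev completion), and $\mathcal{M}_F\subset V$ is precisely the subset of pointwise positive definite sections.

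Next I would establish openness. Positive definiteness of $g_{ij}(x,y)$ is an open condition pointwise, since it persists under perturbations small in the $C^0$ norm; because $SM$ is compact this upgrades to a uniform open condition on sections, so that whenever $g\in\mathcal{M}_F$ and $h\in V$ is sufficiently $C^0$-small, $g+h$ is again positive definite and still lies in $V$. Hence $\mathcal{M}_F$ is open in $V$ -- in fact an open convex cone. Consequently $\mathcal{M}_F$ is a smooth manifold modeled on $V$, covered by a single global chart, and the tangent space at every point is canonically identified as $T_g\mathcal{M}_F\cong V$.

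Finally I would install the Riemannian structure. For $g\in\mathcal{M}_F$ and $h,k\in T_g\mathcal{M}_F\cong V$, define
\begin{equation}
\mathcal{G}_g(h,k):=(h\mid k)=\int_{SM}\langle h\,|\,k\rangle\,\eta,\qquad \langle h\,|\,k\rangle=g^{ia}g^{jb}h_{ij}k_{ab},
\end{equation}
using the pointwise scalar product and the volume form $\eta$ from the Preliminaries. This assignment is manifestly symmetric and bilinear; it is positive definite because, in a $g$-orthonormal frame, the integrand is a sum of squares that vanishes only when $h=0$, and $\eta$ is a positive volume form on the compact manifold $SM$; and its dependence on the base point $g$ is smooth, since $g\mapsto g^{-1}$ and $g\mapsto\eta$ are smooth. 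Thus $(\mathcal{M}_F,\mathcal{G})$ is a Riemannian manifold (in the weak sense appropriate to $L^2$-type metrics in infinite dimensions).

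The main obstacle is analytic bookkeeping rather than any single hard inequality: one must pin down the correct topology on $V$ so that \emph{open subset} and \emph{smooth manifold} are rigorous, and one must verify that the $L^2$-type metric $\mathcal{G}$ genuinely depends smoothly on $g$. The latter is the delicate point, because $g$ enters $\mathcal{G}_g$ not only through the inverse metric $g^{ia}g^{jb}$ in the contraction but also through the volume form $\eta$, which is assembled from the Hilbert form and the Cartan connection and therefore depends nonlinearly on $F$, and hence on $g$. Controlling that nonlinear dependence, and confirming that the structural constraints defining $V$ really do cut out a closed subspace preserved under the relevant variations, is where the care is required.
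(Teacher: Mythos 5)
Your proposal is correct and follows essentially the same route as the paper: both realize $\mathcal{M}_F$ inside the vector space of symmetric sections of $S^2\pi_s^*T^*M$ cut out by the linear constraint $y^k\frac{\partial g_{ij}}{\partial y^k}=0$ (equivalently $C_{ijk}y^j=0$) and then equip the tangent spaces with a global integral inner product over $SM$. The only notable differences are that you explicitly address positive definiteness, exhibiting $\mathcal{M}_F$ as an open convex cone in that vector space (a point the paper glosses over when it calls the solution set itself a vector space), and that you use the plain $L^2$ pairing where the paper uses the Sobolev-type sum $\sum_{s\ge 0}\int_{SM}\langle\tilde{\nabla}^s a,\tilde{\nabla}^s b\rangle\,\eta$; either choice yields a weak Riemannian structure.
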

\begin{proof}
The relation $C_{ijk}y^j=0$ is simplifying to the linear differential equation $y^k\frac{\partial g_{ij}}{\partial y^k}=0$  on a Finsler manifold $M$. It is clear that this PDE is solvable.
Let $D$ be a domain of $SM$ and we have a set of differential equations for a collection of functions $$(g_{11},\dots,g_{1n},\dots,g_{n1},\dots,g_{nn}):D\rightarrow {\mathbb R}^{n\times n}$$
Then solutions of the above differential equation are sections of ${\mathbb R}^{n\times n}$ fibered over $D$. The collection of these sections is an infinite dimensional vector space and so it is an infinite dimensional manifold which is represented by ${\cal M}_F$.
For every $g\in {\cal M}_F$, the tangent space of this manifold is the space of all symmetric two tensors that are zero homogenuouse and symmetric in all three indices i.e, 
$T_g {\cal M}_F=\{h\in S^2(\pi^*_sT^*M)| y^j\frac{\partial h_{ij}}{\partial y^k}=0\}$.
we assume that $h$'s are squar integrable of order s and define global inner product on ${\cal M}_F$ by $$(a,b):=\sum_{s\geq 0}\int_{SM}<\tilde{\nabla}^sa,\tilde{\nabla}^sb>\eta$$ where $a,b\in T_g {\cal M}_F$ and $\tilde{\nabla}$ is cartan connection. This inner product just depends on $x$. Therefore, the pair $({\cal M}_F, (.|.))$ is a Riemannian manifold.
\end{proof}
\begin{definition}
The set of all Finslerian metrics on a compact manifold $M$ is defined as an answer set of linear partial differential equation $y^iC_{ijk}=0$ and represented by ${\cal M}_F$.
\end{definition} 
\section{Different decompositions of tangent space of ${\cal M}_F$}
Let $X=X^i\frac{\partial}{\partial x^i}$ be a section of $\Gamma(\pi^*TM)$. Define a uniqe associated horizontal vector field by $\hat X=X^i\frac{\delta}{\delta x^i}$. Consider canonical linear mapping $\varrho:T_zTM\rightarrow \pi^*T_xM$ that is $\varrho_z(\frac{\delta}{\delta x^i})=\frac{\partial}{\partial x^i}|_x$ and 
$\varrho_z(\frac{\partial}{\partial y^i})=0$ in local coordinates. Suppose $\hat X,\hat Y$ and $\hat Z$ are sections of $\Gamma(TTM)$ so the Lie derivative of Finslerian metric $g$ is defined by $(L_{\hat X}g)(\varrho\hat Y,\varrho\hat Z)$ by using Lie derivative and torsion definitions and properties of cartan connection, we obtian:
\begin{align*}
L_{\hat X}g(\varrho\hat Y,\varrho\hat Z)&=L_{\hat X}g(Y,Z)\\
&=g(symm(\nabla X)\hat Y,Z)+g(Y,symm(\dot{\nabla} X)\hat Z)\\
&+2g(T(X,\dot{Z}),Y)+g(T(\dot{X},Z),Y)+g(T(\dot X,Y),Z)
\end{align*}
where $g(symm(\nabla X)\hat Y,Z):=g(\nabla_{H\hat Y} X,Z)+g(Y,\nabla_{H\hat Z} X)$ and is defined for vertical connection similar to this.\\
Now, suppose vector field $\hat X$ is a complete lift of a vector field $X$ on $M$ by replacing this vector field in Lie derivative equation, and using $y^m\frac{\partial X^i}{\partial x^m}=y^m\frac{\delta X^i}{\delta x^m}$ and 
\begin{align*}
\nabla_{(X^iG^l_i+y^m\frac{\partial X^l}{\partial x^m})\frac{\partial}{\partial x^l}}\frac{\partial}{\partial x^k}&=(X^iG^l_i+y^m\frac{\partial X^l}{\partial x^m})C^m_{lk}\frac{\partial}{\partial x^m}\\
&=(y^m\frac{\delta X^l}{\delta x^m}+y^mX^iF^l_{im})C^m_{lk}\frac{\partial}{\partial x^m}\\
&=y^m\nabla_mX^lC^m_{lk}\frac{\partial}{\partial x^m}
\end{align*}
we deduced that 
\begin{align}
L_{\hat X}g(Y,Z)=\nabla_iX_j+\nabla_jX_i+2y^m\nabla_mX^iC_{kij}
\end{align}
By means of the global inner product, we define the adjoint of this operator.
\begin{lemma}
Let $(M,g)$ be a compact Finslerian manifold and $h$ be an arbitrary symmetric two form of $S^2\pi^*T^*M$, the adjoint of Lie derivative of $h$ in local coordinates is
\begin{align}
\delta h=-(\nabla^ih_{ik}-h_{kj}\nabla_0T^j+\dot{C}_{kij}h^{ij}+C_{kij}\nabla_oh^{ij})\label{div}
\end{align}
\end{lemma}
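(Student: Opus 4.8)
The plan is to characterize $\delta h$ as the formal adjoint, with respect to the global $L^2$ inner product $(\cdot\,|\,\cdot)=\int_{SM}\langle\cdot\,,\cdot\rangle\,\eta$ recalled in the preliminaries, of the linear operator $X\mapsto L_{\hat X}g$ whose components were computed in the displayed formula immediately preceding the statement. Thus $\delta$ is determined by the requirement that $(L_{\hat X}g\,|\,h)=(X\,|\,\delta h)$ hold for every vector field $X$ on $M$ and the fixed symmetric two-tensor $h$. The first step is therefore to insert the explicit expression $(L_{\hat X}g)_{ij}=\nabla_iX_j+\nabla_jX_i+2y^m(\nabla_mX^l)C_{lij}$ into $\int_{SM}(L_{\hat X}g)_{ij}\,h^{ij}\,\eta$ and then to integrate by parts repeatedly so as to remove every derivative from $X$ and shift it onto $h$, the Cartan tensor $C$ and the torsion $T$; reading off the coefficient of $X^k$ in the resulting integrand yields $\delta h$ up to the overall sign.

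I would split the integrand into two pieces. For the symmetrized horizontal part $(\nabla_iX_j+\nabla_jX_i)h^{ij}$, the symmetry of $h$ lets the two terms be combined, and I would integrate by parts by means of Akbarzadeh's codifferential formula for horizontal one-forms, $\delta a=-(\nabla^ja_j-a_j\nabla_0T^j)$, applied with $a_i:=h_{ik}$ for each fixed $k$. This is exactly the mechanism that produces both the divergence term $\nabla^ih_{ik}$ and the Finsler-specific torsion correction $-h_{kj}\nabla_0T^j$ of the stated formula. For the Cartan part I would first observe that $y^m\nabla_m=\nabla_o$ is the horizontal derivative along the spray direction, so the term reads $2(\nabla_oX^l)C_{lij}h^{ij}$; integrating by parts along that direction and applying the Leibniz rule to $\nabla_o\!\big(C_{lij}h^{ij}\big)=(\nabla_oC_{lij})h^{ij}+C_{lij}\nabla_oh^{ij}$ transfers the derivative and contributes precisely $\dot C_{kij}h^{ij}+C_{kij}\nabla_oh^{ij}$.

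It then remains to collect the two contributions. Since $SM$ is compact without boundary, all boundary terms generated in the integrations by parts vanish, so nothing survives beyond the four listed terms. Assembling the coefficient of $X^k$ and carrying through the common minus sign inherited from the codifferential conventions, one recovers $\delta h=-(\nabla^ih_{ik}-h_{kj}\nabla_0T^j+\dot C_{kij}h^{ij}+C_{kij}\nabla_oh^{ij})$, which is the claim.

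The hard part will be the integration by parts on the indicatrix bundle itself. Unlike the Riemannian situation, where the adjoint of the Killing operator is simply a multiple of $-\,\mathrm{div}$, here the divergence theorem on $SM$ couples the horizontal and vertical directions and necessarily generates the extra torsion and Cartan terms, so the main care lies in keeping the homogeneity bookkeeping consistent, i.e. in ensuring that every object remains a well-defined section over $SM$ and that the constraint $C_{ijk}y^j=0$ cutting out $T_g\mathcal{M}_F$ is respected throughout. In particular, I expect the most delicate point to be the spray-direction integration by parts in the Cartan part: one must verify that it produces no additional terms beyond $\dot C_{kij}h^{ij}+C_{kij}\nabla_oh^{ij}$, that is, that the associated boundary and homogeneity contributions cancel exactly.
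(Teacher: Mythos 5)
Your proposal matches the paper's own proof essentially step for step: the paper likewise inserts the coordinate expression for $L_{\hat X}g$ into $\int_{SM}\tfrac12(L_{\hat X}g,h)\,\eta$, uses the symmetry of $h$ to combine the two horizontal terms, integrates the divergence term by parts via Akbar-Zadeh's codifferential for horizontal one-forms (yielding $\nabla^i h_{ik}$ and the torsion correction $-h_{kj}\nabla_0T^j$), and integrates the Cartan term by parts along the spray direction with the Leibniz rule to obtain $\dot C_{kij}h^{ij}+C_{kij}\nabla_o h^{ij}$. The only detail you omit is the normalizing factor $\tfrac12$ in front of $(L_{\hat X}g,h)$, which is what makes the symmetrized pair contribute a single divergence term; otherwise the argument is the same.
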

\begin{proof}
\begin{align*}
\int_{SM}\frac{1}{2}(L_{\hat X}g,h)\eta&=\frac{1}{2}\int_{SM}(\nabla_iX_j+\nabla_jX_i+2y^m\nabla_mX^kC_{ijk})h^{ij}\eta\\
&=\int_{SM}\nabla_iX_jh^{ij}\eta+\int_{SM}y^m\nabla_mX^kC_{kij}h^{ij}\eta\\
&=\int_{SM}(h_{ik}\nabla_0T^i-\nabla^ih_{ij}-(\nabla_0C_{ijk})h^{ij}-C_{ijk}\nabla_0h^{ij})X^k\eta\\
&=-\int_{SM}(\nabla^ih_{ik}-h_{ik}\nabla_0T^i+\dot{C}_{kij}h^{ij}+C_{kij}\nabla_oh^{ij})X^k\eta\\
&=\int_{SM}(X,\delta h)\eta
\end{align*}
\end{proof}
\begin{definition}
Divergence of symmetric two forms in $S^2\pi^*T^*M$ is adjoint of $L_{\hat X}g$ with respect to global inner product and calculated by \ref{div} and represented by $\delta$.
\end{definition}
\begin{theorem}
The Berger-Ebin decomposition for $T_g{\cal M}_F$ is $T_g{\cal M}_F=\{h|h=L_{\hat X}g\}\oplus S^T$ where $S^T:=\{h|\delta_gh=0\}$
\end{theorem}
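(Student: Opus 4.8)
The plan is to recognize this as the Finslerian incarnation of the classical Hodge-type splitting associated with the divergence operator $\delta$ constructed in the preceding Lemma. Write $PX := \tfrac12 L_{\hat X}g$ for the Lie-derivative operator sending a horizontal vector field to a symmetric two-form; by the Lemma its formal adjoint with respect to the global inner product $(\cdot\,|\,\cdot)$ is exactly the divergence, so that $P^{*}=\delta$ and equivalently $\delta^{*}=P$. The first summand $\{h\mid h=L_{\hat X}g\}$ is then the image $\operatorname{Im}P$, while $S^{T}=\ker\delta=\ker P^{*}$, and the asserted identity is precisely the orthogonal splitting $\operatorname{Im}P\oplus\ker P^{*}$.

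Orthogonality is immediate and purely formal. If $h=L_{\hat X}g=2PX$ lies in the first factor and $h'\in S^{T}$ so that $\delta h'=0$, then $(h\,|\,h')=2(PX\,|\,h')=2(X\,|\,\delta h')=0$. Hence the two subspaces intersect only in the zero tensor and their sum is direct and orthogonal.

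The substantive content is \emph{completeness}: that every $h\in T_g\mathcal{M}_F$ actually splits as $h=L_{\hat X}g+h_{0}$ with $\delta h_{0}=0$. Applying $\delta$ to this desired identity forces $\delta h=2\,\delta\delta^{*}X$, so the splitting exists as soon as the second-order operator $\delta\delta^{*}=P^{*}P$ can be inverted against the datum $\tfrac12\delta h$. I would establish this by the Fredholm alternative: $P^{*}P$ is formally self-adjoint and non-negative, with $\bigl((P^{*}P)X\,|\,X\bigr)=\|PX\|^{2}$, so that $\ker(P^{*}P)=\ker P$, the space of Finslerian Killing-type fields. The equation $P^{*}PX=\tfrac12\delta h$ is solvable provided its right-hand side is orthogonal to $\ker P$, and this holds automatically, since for any $Y$ with $PY=0$ one has $(\delta h\,|\,Y)=(h\,|\,PY)=0$. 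Setting $h_{0}:=h-2PX$ then yields $\delta h_{0}=\delta h-2\delta\delta^{*}X=0$, which is the required decomposition.

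The \emph{main obstacle} is exactly the analytic step underlying this Fredholm argument: one must show that $P^{*}P$ is an elliptic operator on the compact base $SM$, so that its range is closed and the orthogonal-complement criterion legitimately applies. Here the Finsler setting complicates the classical Riemannian computation, because the formula \eqref{div} for $\delta$ mixes horizontal derivatives $\nabla$, vertical derivatives $\dot\nabla$, and the Cartan-tensor contractions $C_{kij}$ and $\dot C_{kij}$; the principal symbol of $P^{*}P$ must be computed and checked to be injective in both the $dx^{i}$ and the $\delta y^{i}$ codirections. A secondary technical point is the choice of function space, namely the completion of $T_g\mathcal{M}_F$ under the Sobolev-type norm $(a,a)=\sum_{s\ge 0}\int_{SM}\langle\tilde\nabla^{s}a,\tilde\nabla^{s}a\rangle\,\eta$ from the Proposition, in which $\delta\delta^{*}$ becomes a bounded-below self-adjoint operator with discrete spectrum; elliptic regularity then guarantees that the decomposition of square-integrable tensors descends to smooth ones.
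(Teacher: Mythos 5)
Your argument is essentially the paper's own: the paper likewise sets up the adjoint pair ($\tau_g h=-\pi_{s*}\delta_g h$ with adjoint $X\mapsto L_{\hat X}g$), writes down the principal symbol $\sigma_t(\tau)=\varrho^*t\otimes X^h_{\sharp}+X^h_{\sharp}\otimes\varrho^*t$, asserts its injectivity, and invokes the Berger--Ebin splitting $\mathrm{Im}\,\tau^*\oplus\ker\tau$ --- which is exactly the orthogonality-plus-Fredholm-alternative argument you unpack in detail. The single step you flag as the main obstacle, injectivity of the symbol (equivalently ellipticity of $P^{*}P$), is precisely the point the paper disposes of by exhibiting the symbol explicitly, so your outline follows the same route and is correct modulo that computation, which the paper itself only asserts rather than verifies.
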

\begin{proof}
We define the differential operator $\tau_g$ for every $g\in {\cal M}_F$ by $\tau_g h:=-\pi_{s*}\delta_g h$ which is an operator from $T_g{\cal M}_F$ to $\Gamma TM$. Its adjoint is denoted by $\tau^*$ and define by $L_{\hat X}g$ where $\hat X$ is a complete lift of $X$.
For an arbitrary one form $t$ on $SM$, the symbole of $\tau$ is defined by $\sigma_t(\tau):=\varrho^*t\otimes X^h_{\sharp}+ X^h_{\sharp}\otimes \varrho^*t$ and it is injective so the Berger-Ebin decomposition of $T_g{\cal M}_F$ is $Im\tau\oplus ker\tau^*$.
\end{proof}
\begin{definition}
Section $X$ of the tangent bundle $TM$ is a Finslerian killing vector field if its complete lift $\hat{X}$ on $TM$ is a killing vector field for Finslerian metric $g$, that is $L_{\hat X}g=0$. 
\end{definition}
A point-wise conformal deformation of a Finslerian metric $g$, is $\tilde{g}(x,y)=f(x)g(x,y)$ where $f$ is a smooth positive function on $M$,\cite{Kn}.Since there is a one to one corresponding between space of positive functions and space of exponential functions
by $f\rightarrow e^f$, we can write $\tilde{g}=e^fg$. Therefore, let ${\cal P}$ be the product group of positive function on $M$ that acts on ${\cal M}_F$ by function $A$ as follows:
\begin{align*}
A:{\cal P}\times{\cal M}_F\rightarrow {\cal M}_F\\
A(f,g):=f.g
\end{align*}
This action is free and smooth. The orbit of this action at $g\in{\cal M}_F$ is defined by$A_g=\{fg|f\in{\cal P}\}$ which is a submanifold of ${\cal M}_F$ \cite{S}. Tangent space of this manifold at $g$ is ${\cal F}g=\{h=kg|k\in C^{\infty}(M)\}$ where is a subspace of $S^2\pi^*T^*M$ at each point $g\in {\cal M}_F$. 
Orthogonal subspace of ${\cal F}g$ with respect to the global inner product is $\{h\in S^2\pi^*_sT^*M|\int_{SM}kgh\eta=0\}=\{h\in S^2\pi^*_sT^*M|tr(h)=0\}$. On the other hand, from the variation of the volume forms \cite{A2}, we have $tr(h)=0$ is equivalent of being constant volume on $SM$. So the orthogonal space of ${\cal F}g$ is the space of two forms which preserve volume $SM$ through metric variations. Thus there is a point wise decomposition like 
\begin{align}
S^2\pi^*_sT^*M={\cal F}g\oplus S^T\label{1}
\end{align}
Let $D$ be the group of infinitesimal diffeomorphism on $M$ and ${\cal P}$ be a one parameter group of positive function on $M$. Put ${\cal C}={\cal D}\ltimes{\cal P}$ which is a semi-direct group with the following action:
\begin{align*}
(\eta_1,f_1).(\eta_2,f_2)=(\eta_1 o \eta_2,f_2.(f_1 o \eta_2))
\end{align*}
This group acts on ${\cal M}_F$ by function $\tilde{A}$ as follows:
\begin{align*}
{\tilde A}:{\cal C}\times{\cal M}_F\rightarrow {\cal M}_F\\
\tilde{A}((\eta, f),g)=f.(\tilde{\eta}^*g)
\end{align*}
where $\tilde{\eta}$ is the natural extention of $\eta$ on $TM$ which is defined by $\tilde{\eta}_t:(x^i,y^i)\rightarrow (x^i+tv^i, y^i+ty^m\frac{\partial v^i}{\partial x^m})$ such that $v^i$s are components of vector field $V$ on $M$ which is inducing infinitesimal point transformation $\eta_t$ .
 It is clear that $\hat{V}:=\frac{d}{dt}|_{t=0}\tilde{\eta}_t$ is a complete lift of the vector field $V$ on $TM$. The orbit of ${\tilde A}$ passing through $g\in\cal{M}_F$ is
\begin{align*}
{\tilde A}_g:{\cal F}\rightarrow {\cal M}_F\\
{\tilde A}_g(\eta,g)=f.(\tilde{\eta}^*g)
\end{align*}
which is a submanifold of ${\cal M}_F$. So we dfine $\tau_g:=d{\tilde A}_g|_{(e,1)}$ as follows:
\begin{align*}
\tau_g:\Gamma(TM)\times{\cal F}\rightarrow T_g{\cal M}_F\\
\tau_g(X,f)=L_{\hat X}g+kg
\end{align*}
The adjoint of $\tau_g$ is denoted by $\tau^*_g$ and defined by:
\begin{align*}
\tau^*_g:T_g{\cal M}_F\rightarrow\Gamma(TM)\times{\cal F}\\
h\rightarrow(\pi_{s*}(\sharp div h),\int_{SM}tr(h)\eta)
\end{align*}
The kernel of this map is $S^{TT}=\{h\in T_g{\cal M}_F|\sharp div h=0, \int_{SM}tr(h)\eta=0\}$, and since the symbol of the map $\tau_g$ i.e. $\sigma_t(\tau_g)(X,f)=fg+t\bigotimes \pi_s^* X_{\sharp}+\pi_s^* X_{\sharp}\bigotimes t$ where $t$ is an arbitrary one form on $SM$ is injective so the Berger-Ebin decomposition is
\begin{align*}
S^2\pi_s^*T^*M=S^{TT}\oplus Im\tau_g
\end{align*}
By corresponding this decomposition with point-wise decomposition \ref{1} ,we have
\begin{align*}
S^2\pi_s^*T^*M={\cal F}g\oplus S^{TT}\oplus (S^T\cap Im\tau_g)
\end{align*}
The last term of the right hand side of the above equation indicates that every two form $h=L_{\hat X}g+fg$ preserve volume of $SM$ that is $tr(h)=0$ so $h$ must be in the form $h=L_{\hat X}g-\frac{2}{n}div(\hat{X})g$. All of the above discusion can be summarized in the following theorem.
\begin{theorem}
The Berger-Ebin decomposition for $T_g{\cal M}_F$ according to conformal deformation of metrics is $T_g{\cal M}_F={\cal F}g\oplus S^{TT}\oplus (S^T\cap Im\tau_g)$.
\end{theorem}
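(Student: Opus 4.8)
The plan is to obtain the claimed three-term splitting by intersecting the two decompositions already assembled in the discussion preceding the statement: the point-wise conformal decomposition \ref{1}, namely $S^2\pi^*_sT^*M={\cal F}g\oplus S^T$ with $S^T$ the space of trace-free two forms, and the elliptic (Berger--Ebin) decomposition $S^2\pi^*_sT^*M=S^{TT}\oplus Im\tau_g$ coming from the orbit map $\tau_g$ of the semi-direct group ${\cal C}={\cal D}\ltimes{\cal P}$. Both are orthogonal for the global inner product $(.|.)$, and the whole argument is carried out inside $T_g{\cal M}_F$ once one checks that $\tau_g$, its adjoint $\tau^*_g$, and the trace operator all respect the zero-homogeneity constraint defining $T_g{\cal M}_F$.

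First I would establish the elliptic decomposition rigorously. The symbol computation given above shows that $\sigma_t(\tau_g)$ is injective for every nonzero one form $t$, so $\tau_g$ is an overdetermined elliptic operator; since $SM$ is compact, Fredholm theory for such operators shows that $Im\tau_g$ is closed and that its orthogonal complement is precisely $\ker\tau^*_g$. Using the explicit formula for $\tau^*_g$, this kernel is $S^{TT}=\{h\in T_g{\cal M}_F\,|\,\sharp\,div\,h=0,\ \int_{SM}tr(h)\eta=0\}$, which yields the orthogonal splitting $T_g{\cal M}_F=S^{TT}\oplus Im\tau_g$.

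Next I would refine $Im\tau_g$ by means of the conformal decomposition \ref{1}. The key observation is that ${\cal F}g\subseteq Im\tau_g$: setting $X=0$ in $\tau_g(X,f)=L_{\hat X}g+kg$ gives $\tau_g(0,f)=kg$, and letting $f$ vary recovers all of ${\cal F}g$. Hence for any $h\in Im\tau_g$, writing $h=h_{\cal F}+h_{S^T}$ by \ref{1}, the summand $h_{\cal F}\in{\cal F}g$ already lies in $Im\tau_g$, so $h_{S^T}=h-h_{\cal F}\in S^T\cap Im\tau_g$. This proves $Im\tau_g={\cal F}g\oplus(S^T\cap Im\tau_g)$, the sum being direct since ${\cal F}g\cap S^T=0$ and orthogonal since ${\cal F}g\perp S^T$. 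Substituting into the elliptic decomposition gives $T_g{\cal M}_F=S^{TT}\oplus{\cal F}g\oplus(S^T\cap Im\tau_g)$, and mutual orthogonality of the three summands is automatic: both ${\cal F}g$ and $S^T\cap Im\tau_g$ lie in $Im\tau_g\perp S^{TT}$, while ${\cal F}g\perp S^T\cap Im\tau_g$ is inherited from \ref{1}. A trace computation then identifies the last summand explicitly, since $L_{\hat X}g+kg$ is trace-free exactly when $k=-\tfrac{2}{n}div(\hat X)$, recovering the form $h=L_{\hat X}g-\tfrac{2}{n}div(\hat X)g$ noted just before the theorem.

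The main obstacle I expect is the analytic step rather than the bookkeeping: justifying that the overdetermined operator $\tau_g$ has closed range and that $T_g{\cal M}_F$ genuinely splits as $Im\tau_g\oplus\ker\tau^*_g$. This needs the full strength of elliptic theory on the compact indicatrix bundle $SM$ -- Sobolev completions of $T_g{\cal M}_F$, elliptic estimates for the symbol-injective system, and the Fredholm alternative -- together with a careful check that the Cartan-connection divergence $\delta$ of \ref{div} interacts correctly with the homogeneity condition, so that the splitting remains within the zero-homogeneous symmetric tensors. Everything downstream of that is the purely algebraic intersection argument above.
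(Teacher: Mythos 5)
Your proposal takes essentially the same route as the paper: the paper likewise obtains the theorem by combining the pointwise conformal splitting \ref{1} with the Berger--Ebin splitting $S^{TT}\oplus Im\tau_g$ coming from the orbit map of ${\cal C}={\cal D}\ltimes{\cal P}$, and then identifies the last summand via the trace condition $tr(h)=0$. If anything you supply more detail than the source, which compresses the whole intersection argument into the phrase ``by corresponding this decomposition with point-wise decomposition''; your explicit observation that ${\cal F}g\subseteq Im\tau_g$, hence $Im\tau_g={\cal F}g\oplus(S^T\cap Im\tau_g)$, is exactly the step the paper leaves implicit.
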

\section{Curvature functionals on $\cal{M}$}
\begin{definition}
A variation of a Finslerian metric, $g_o$ is a one-parameter family of this metric i.e, $\{g_{t}\}_{t\in I}$ where $g_t=g_o+th$ ,$g_o\in{\cal M}_F$
and $h\in T_g {\cal M}_F$.
\end{definition}
According to the above definition, a variation of Finslerian metric is a curve in infinite dimensional manifold ${\cal M}_F$,
with tangent vector field, $h:=g'_t=\frac{\partial g_t}{\partial t}$.
When a Finslerian metric has been deformed, geometric structures, like nonlinear coefficients, curvatures, volume forms and indicatrix 
will be changed
. Variations of these objects are 
calculated in \cite{A2},
\begin{align}
\eta'&=(g^{ij}-\frac{n}{2}u^iu^j)h_{ij}\eta\label{eq0}\\
G'^i_j&=\frac{1}{2}(\nabla_kh^i_o+\nabla_oh^i_k-\nabla^ih_{ok})-2C^i_{ks}G'^s \label{eq1}\\
R'^{i}_{~jkl}&=\nabla_k\Lambda^i_{~jl}-\nabla_l\Lambda^i_{~jk}+P^i_{~jlr}\Lambda^r_{~ok}-P^i_{~jkr}\Lambda^r_{ol}+C'^i_{~jr}R^r_{~okl}
\end{align}
where
$$\Lambda^i_{~jk}=\Gamma'^i_{~jk}+C^i_{~jr}\Gamma'^r_{ok}$$ and 
\begin{align*}
\Gamma'^i_{~jk}&=\frac{1}{2}g^{im}(\nabla_k h_{mj}+\nabla_jh_{mk}-\nabla_mh_{jk})\\
&-(C^i_{~js}G'^s_k+C^i_{~ks}G'^s_j-C_{kjs}G'^s_mg^{im})
\end{align*}
Akbazade by using differen kinds of curvatures introduced some functionals in\cite{A1,A2} like $I(g_t)=\int_{SM}{\hat H_t}\eta_t$ where ${\hat H}={\tilde H}-c(x)H(u,u)$ which is a more general case of other ones and by means of the above variations find their critical points and through this way, he defined generalized Einstien metrics(GEM) on Finsler manifolds as critical points of these functionals based on the assumption that indicatrix has constant volume. Since all thier functionals caused to GEM so we just choose the general one,$I(g_t)$ and works with it. According to Akbarzade's calculation, the Euler-Lagrange equation of these functionals is $-\tilde{H}_{ij}+c(x)g_{ij}=0$, so thier associated gradient flow is $\frac{\partial g_{ij}}{\partial t}=-\tilde{H}_{ij}+H(u,u)g_{ij}=-\nabla I(g_t)$. But there is a question here, is there any answer for this equation? For answering this question, consider the linearization of this equation. Since $\tilde{H_{ij}}'y^iy^j=\frac{1}{2}\frac{\partial}{\partial y^i}\frac{\partial}{\partial y^j}H'_{kr}y^ky^r$ and $H'_{ks}y^ky^s=2\nabla_rG'^r-\nabla_0G'^r_r+2\nabla_0T_rG'^r$\cite{A2} and use \ref{eq1}, we deduced that the first term of the gradient flow is of order 4 in term of $h$. For eliminating this problem, we contract both of the equation with $u^i$ and $u^j$ so we have $\partial_t \log F_t=0$ which is degenerate. So we consider one of the two terms by contracting, we obtain $\partial_t \log F_t=-H(u,u)$. This equation is introduced by Bao \cite{B1} and called Ricci flow. This equation like in Riemannian case is not a gradient flow.\\
\begin{proposition}
The functional $I(g_t)$ is a Riemannian functional.
\end{proposition}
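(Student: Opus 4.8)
The plan is to read ``Riemannian functional'' in the sense of Besse, namely a real-valued function on ${\cal M}_F$ that is invariant under the natural action of the diffeomorphism group of $M$. Concretely, I would show that for every diffeomorphism $\phi$ of $M$, whose natural lift $\tilde\phi$ to $TM$ is the complete lift already used above (sending $(x^i,y^i)\mapsto(\phi^i(x),y^m\partial\phi^i/\partial x^m)$) and which restricts to $SM$, one has $I(\tilde\phi^*g)=I(g)$. The payoff of establishing this is the standard one: a diffeomorphism-invariant functional has a divergence-free gradient via the second Bianchi identity, which is exactly the structural input needed to make sense of the gradient flow $\partial_t g_{ij}=-\tilde H_{ij}+H(u,u)g_{ij}$ discussed just above.

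First I would record that the integrand $\hat H=\tilde H-c(x)H(u,u)$ and the volume form $\eta$ are built from $F$ (equivalently from $g$) only through intrinsic operations: the Cartan and Berwald connections, their $hh$-curvature, the canonical section $u=y/F$, metric contractions, and the Hilbert form $\omega$ entering $\eta=\frac{(-1)^N}{n-1}\omega\wedge(d\omega)^{n-1}$. Each of these is defined without reference to a choice of coordinates, hence each is natural with respect to the lifted map.

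Second, I would verify the equivariance of the lift. Since $\tilde\phi$ is fibrewise linear, it preserves the homogeneity of all coefficients and the defining constraint $y^jC_{ijk}=0$, and it maps the indicatrix $S_xM$ onto $S_{\phi(x)}M$, so $\tilde\phi$ restricts to a diffeomorphism of the compact orientable manifold $SM$. Because connection, curvature, contractions, and Hilbert form are natural, the invariants of the pulled-back metric are the pullbacks of the invariants of $g$; schematically $\tilde H[\tilde\phi^*g]=\tilde\phi^*\tilde H[g]$, $H(u,u)[\tilde\phi^*g]=\tilde\phi^*H(u,u)[g]$, and $\eta[\tilde\phi^*g]=\tilde\phi^*\eta[g]$. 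Finally, the change-of-variables formula on $SM$ gives
\begin{align*}
I(\tilde\phi^*g)=\int_{SM}\tilde\phi^*\hat H\,\tilde\phi^*\eta=\int_{SM}\tilde\phi^*(\hat H\,\eta)=\int_{SM}\hat H\,\eta=I(g),
\end{align*}
which is the claimed invariance.

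I expect the second step to be the main obstacle. Unlike the Riemannian case, the Finslerian curvature quantities depend on the fibre variable $y$ through the Cartan tensor and the nonlinear connection, so one must check carefully that the naturality of the Berwald/Cartan data genuinely survives the lift, that $\tilde\phi$ intertwines the horizontal--vertical splitting spanned by $\{\delta/\delta x^i,\partial/\partial y^i\}$, and that it preserves $\eta$ up to pullback. Once this naturality of the lifted map is in hand, the integral invariance, and hence the proposition, follow immediately.
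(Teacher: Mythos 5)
Your proposal is correct and follows essentially the same route as the paper: both arguments rest on the naturality identities $\tilde H_{\tilde\phi^*g}=\tilde\phi^*\tilde H_g$, $H(u,u)_{\tilde\phi^*g}=\tilde\phi^*H(u,u)_g$, $\eta_{\tilde\phi^*g}=\tilde\phi^*\eta_g$ for the lifted diffeomorphism acting on $SM$, followed by the change-of-variables formula for the integral. Your version is in fact slightly cleaner, since you conclude the invariance directly from $\int_{SM}\tilde\phi^*(\hat H\,\eta)=\int_{SM}\hat H\,\eta$ rather than passing through the paper's additional (and unnecessary) step of differentiating at $t=0$.
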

\begin{proof}
Suppose $\varphi$ is a diffeomorphism on $TM$ and $g$ a Finslerian metric on $M$, so $\varphi^*g\in {\cal M}_F$. With straight calculation for one parameter family of diffeomorphism obtained $\tilde{H}_{\varphi_t^*g_0}=\varphi_t^*\tilde{H}_{g_0}$, $H(u,u)_{\varphi_t^*g_0}=\varphi_t^*H(u,u)_{g_0}$ and $\eta_{\varphi_t^* g}=\varphi_t^*\eta_{g_0}$ by replacing these equations in $I(g_t)$, we obtain:
\begin{align*}
I(\varphi_t^*(g_0))&=\int_{\tilde{SM}}\hat{H}_{\varphi_t^*g_0}\eta_{\varphi_t^*g_0}=\int_{\varphi_t^*(SM)}\varphi_t^*\hat{H}_{g_0}\varphi_t^*\eta_{g_0}=\int_{\varphi_t^*(SM)}\varphi_t^*(\hat{H}_{g_0}\eta_{g_0})\\
&=\varphi_t^*(I(g_0))
\end{align*}
Taking derivative from both sides of above equation:
$$\frac{d}{dt}|_{t=0}I(\varphi_t^*g_0)=I'(g_0)L_Xg_0=\frac{d}{dt}|_{t=0}\varphi_t^*(I(g_0))=X.I(g_0)=0$$
So $I(\varphi_t^*(g_0))=I(g_0)$and it means $I(g_t)$ is a Riemannian functional. 
\end{proof} 
\begin{remark}
Curvature functional $I(g_t)$ is invariant under one parameter group of diffeomorphisms so we can consider the curvature functional on the space $\frac{{\cal M}_F}{{\cal D}}$. specially $I(g_t)$ is invariant under Finsler killing vector fields.
\end{remark}
According to the Berger-Ebin decomposition, there are two kinds of conformal deformation, a point-wise conformal deformation and infinitesimal conformal deformation that the first one takes part in ${\cal F}g$ and the other takes part in $S^{TT}$. For our purpose, we restrict Akbar Zadeh's introducing functionals on ${\cal F}g$.
\begin{lemma}
The variation of volume form with respect to the point-wise conformal variation is $\eta'=\frac{1}{2}tr_g(h)\eta$.
\end{lemma}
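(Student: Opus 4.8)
The plan is to obtain the stated identity directly from the general variation formula for the volume form, equation \eqref{eq0}, by specializing to the conformal direction. Recall that a point-wise conformal variation is precisely a tangent vector $h\in{\cal F}g$, so that $h_{ij}=k\,g_{ij}$ for some $k\in C^{\infty}(M)$. Substituting this particular $h$ into \eqref{eq0} reduces the whole problem to evaluating the two contractions $g^{ij}h_{ij}$ and $u^iu^jh_{ij}$, after which the claimed coefficient $\tfrac12 tr_g(h)$ should fall out purely algebraically.

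First I would handle the trace contraction. Since $g^{ij}g_{ij}=n$, we get $g^{ij}h_{ij}=k\,g^{ij}g_{ij}=nk$, and this is exactly $tr_g(h)$. Next I would handle the directional contraction, which is the only place where the Finslerian structure genuinely enters. The key input is Euler's theorem together with the normalization on the indicatrix bundle: because $g_{ij}$ is the Hessian of $\tfrac12 F^2$ and $F$ is positively homogeneous of degree one in $y$, one has $g_{ij}y^iy^j=F^2$, hence $g_{ij}u^iu^j=1$ for the normalized direction on $SM$. Consequently $u^iu^jh_{ij}=k\,g_{ij}u^iu^j=k$. Feeding both contractions back into \eqref{eq0} yields
\begin{align*}
\eta'=\Bigl(g^{ij}-\tfrac{n}{2}u^iu^j\Bigr)h_{ij}\,\eta
=\Bigl(nk-\tfrac{n}{2}k\Bigr)\eta
=\tfrac{n}{2}k\,\eta
=\tfrac12\,tr_g(h)\,\eta,
\end{align*}
which is the assertion.

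There is no deep obstacle here; the argument is a one-line substitution into a formula already established in \cite{A2}. The two points that demand care are, first, that the conformal factor $k$ depends only on $x$, so that it may be pulled out through every fibrewise contraction as a scalar, and second, that the normalization $g_{ij}u^iu^j=1$ is invoked consistently with the convention under which \eqref{eq0} was derived. Even if one prefers to work with the unnormalized direction $y$ and carry factors of $F$, the homogeneity of $g_{ij}$ forces the term $\tfrac{n}{2}u^iu^jh_{ij}$ to collapse to $\tfrac{n}{2}k$ regardless, so the final coefficient $\tfrac12 tr_g(h)$ is unaffected by that choice.
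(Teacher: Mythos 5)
Your proof is correct and follows essentially the same route as the paper: substitute the conformal direction $h_{ij}=k\,g_{ij}$ into the variation formula \eqref{eq0} and evaluate the two contractions $g^{ij}h_{ij}=nk$ and $u^iu^jh_{ij}=k$. In fact your computation lands cleanly on the stated coefficient $\tfrac12 tr_g(h)$, whereas the paper's own proof ends with $\tfrac{1}{2n}tr_g(h)\eta$, which contradicts the lemma it is proving and appears to be a slip (the factor $\tfrac{n}{2}\varrho=\tfrac{n}{2}\cdot\tfrac1n tr_g(h)$ gives $\tfrac12 tr_g(h)$, as you found).
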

\begin{proof}
The point-wise conformal variation of a metric $g$ is ${\tilde g}_{ij}=e^{2f(t,x)}g_{ij}$ so $h_{ij}=\varrho(t,x)g_{ij}$ where $\varrho(t,x)=f'(t,x)e^{f(t,x)}=\frac{1}{n}tr_g(h)$ by replacing this equation in \ref{eq0} we obtain $\eta'=\frac{1}{2n}tr_g(h)\eta$.
\end{proof}
\begin{theorem}
Let $M$ be a closed and connected Finslerian manifold with dim$\geq 3$. A metric $g$ is critical for $I(g_t)$ under all pointwise conformal variations if and only if the Finslerian manifold is Ricci directional flat. 
\end{theorem}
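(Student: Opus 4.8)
The plan is to compute the first variation $\frac{d}{dt}\big|_{t=0}I(g_t)$ directly from $I(g_t)=\int_{SM}\hat H_t\,\eta_t$, restrict the variation field to the pointwise conformal ansatz $h_{ij}=\varrho\,g_{ij}$ with $\varrho=\varrho(x)=\tfrac1n\,tr_g(h)$, and then read off the Euler--Lagrange condition. By the product rule,
\[
\frac{d}{dt}\Big|_{t=0}I(g_t)=\int_{SM}\bigl(\hat H'+\hat H\,\tfrac12 tr_g(h)\bigr)\eta,
\]
where I have inserted the conformal volume variation $\eta'=\tfrac12 tr_g(h)\,\eta$ from the preceding lemma. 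Writing $\hat H=\tilde H-c(x)H(u,u)$ and $tr_g(h)=n\varrho$, the whole computation reduces to finding the conformal variations $\tilde H'$ and $H(u,u)'$ and assembling them with the undifferentiated term $\tfrac n2\varrho\,\hat H$.

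First I would specialise the curvature variation formulas to $h=\varrho g$. Because $\varrho$ depends only on $x$, horizontal derivatives collapse to $\nabla_k h_{ij}=(\nabla_k\varrho)g_{ij}$, while the vertical derivatives reduce to Cartan-tensor terms, $\dot\partial_k h_{ij}=2\varrho\,C_{ijk}$; substituting these into (\ref{eq1}) gives ${G'}^i_j$ as an explicit first-order expression in $\varrho$, and feeding that into the formulas for ${\Gamma'}^i_{jk}$, $\Lambda^i_{jk}$ and ${R'}^i_{jkl}$ produces the variation of the $hh$-curvature. Tracing with $g^{ij}$ and using $(g^{ij})'=-\varrho\,g^{ij}$ then yields $\tilde H'=-\varrho\tilde H+g^{ij}\tilde H_{ij}'$, while a parallel contraction with $u^ju^l$ yields $H(u,u)'$; the homogeneity relation $F'=\tfrac12\varrho F$ governs how the direction $u=y/F$ and the indicatrix itself move under the flow.

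Next I would integrate by parts over the closed manifold $SM$. The derivatives of $\varrho$ produced in the previous step are removed using the codifferential operators $\delta$ of the preliminaries together with the adjoint (divergence) identity (\ref{div}) of the earlier Lemma; since $SM$ is compact without boundary all boundary terms drop, leaving
\[
\frac{d}{dt}\Big|_{t=0}I(g_t)=\int_{SM}\Phi\,\varrho\,\eta,
\]
with $\Phi$ an explicit scalar built from $\tilde H$ and $H(u,u)$ carrying a dimensional factor analogous to the Riemannian $(n-2)/2$. At this point the hypothesis $\dim M\ge 3$ enters: it guarantees this numerical factor is nonzero, so the $\Phi$-term does not degenerate as it would for $n=2$.

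Finally, since $\varrho$ is the arbitrary infinitesimal conformal factor and depends only on $x$, the fundamental lemma of the calculus of variations applied to $\int_M\varrho(x)\bigl(\int_{S_xM}\Phi\,\eta_x\bigr)dx=0$ forces $\int_{S_xM}\Phi\,\eta_x=0$ for every $x$; combined with Akbarzadeh's integral identities on $SM$ relating $\tilde H$ to the directional curvature, this is equivalent to $H(u,u)\equiv 0$, i.e. Ricci directional flatness, and since every step is an equivalence the argument is reversible and both implications follow. I expect the main obstacle to be the second step, namely the conformal variation of the Finslerian $hh$-curvature: the formula for ${R'}^i_{jkl}$ carries many Cartan-tensor and $\dot\nabla C$ contributions, and showing that after tracing these collapse into the single clean scalar $\Phi$ proportional to the Ricci directional curvature is the genuinely technical heart of the proof.
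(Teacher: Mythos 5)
Your plan diverges from the paper's proof in a substantive way. The paper does not recompute the conformal variation of the $hh$-curvature at all: it starts from the already-assembled Euler--Lagrange integrand (equation (\ref{eq2}), taken from Akbar-Zadeh's variational computation), specializes to conformal directions via $h^{jk}=\tfrac{tr_g(h)}{n}g^{jk}$, and then plays a purely algebraic game --- contracting (\ref{eq2}) once with $u^ju^k$ and once with $g^{jk}$, solving the resulting pair of scalar equations for $\hat H$ and for $\lambda H(u,u)+(n\tau-\phi)$, and substituting back to land on $(n-2)\tilde H(u,u)=0$. The $\dim\ge 3$ hypothesis enters exactly where you predicted, but everything else is linear algebra on an integrand that is quoted, not derived. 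You, by contrast, propose to rederive the integrand from the variation formulas for $G'^i_j$, $\Gamma'^i_{jk}$, $\Lambda^i_{jk}$ and $R'^i_{jkl}$, and you yourself flag that step as ``the genuinely technical heart'' --- which means the heart of your proof is left unexecuted. A plan that defers the one computation everything depends on is not yet a proof.

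There is also a concrete logical gap at your final step. After integration by parts you reach $\int_{SM}\Phi\,\varrho\,\eta=0$ for all $\varrho=\varrho(x)$, and the fundamental lemma then gives only the \emph{fiber-integrated} condition $\int_{S_xM}\Phi\,\eta_x=0$ for each $x\in M$. In the Riemannian case the analogous integrand is constant on fibers, so this immediately yields a pointwise equation; in the Finsler case $\tilde H$, $H(u,u)$ and hence $\Phi$ genuinely depend on the fiber variable $y$, so the vanishing of a fiber integral does \emph{not} imply $\Phi=0$, let alone $H(u,u)\equiv 0$. You wave at ``Akbarzadeh's integral identities'' to close this, but without exhibiting them the implication is unproven --- and this is precisely the difficulty the paper's trace-manipulation route is designed to sidestep, since it extracts pointwise scalar identities (\ref{eq3})--(\ref{eq6}) directly from the tensor equation rather than from an integrated condition. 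Finally, your closing claim that ``every step is an equivalence so the argument is reversible'' is not earned: the forward direction discards information at each contraction, so the converse (Ricci-directional flatness implies criticality under all conformal variations) needs its own, separate verification.
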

\begin{proof}
The derivative of functional $I(g)$ in usual direction is 
\begin{align}
({\tilde H}_{jk}-\lambda H(u,u)u_ju_k-(n\tau-\phi)u_ju_k-\frac{1}{2}\hat{H}g_{jk})h^{jk}=0\label{eq2}
\end{align}
We product two side of equaion by $u^iu^j$ and obtain
\begin{align}
{\tilde H}(u,u)-\lambda H(u,u)-(n\tau-\phi)-\frac{{\hat H}}{2}=0\label{eq3}
\end{align}
Since $t^{jk}=\frac{tr_g(t)}{n}g^{jk}$, so we product both side of \ref{eq2} by $g^{jk}$, 
\begin{align}
{\tilde  H}-\lambda H(u,u)-(n\tau-\phi)-\frac{n}{2}{\hat H}=0\label{eq4}
\end{align}
Now from equations \ref{eq3} and \ref{eq4}, we deduced that 
\begin{align}
\frac{n-1}{2}{\hat H}=-\frac{n-1}{2}a+{\tilde H}-{\tilde H}(u,u)\label{eq5}
\end{align}
and 
\begin{align}
\lambda H(u,u)+(n\tau-\phi)=\frac{n}{n-1}{\tilde H}(u,u)-\frac{1}{n-1}{\tilde H}\label{eq6}
\end{align}
Replacing two last equations in \ref{eq2}. By simplifying the equation and product it to $u^iu^j$, we have
$$(n-2){\tilde H}(u,u)=0$$
so $H(u,u)=0$
\end{proof}
This functional is not invariant under scaling. For eliminating this problem, we use a normal factor $\psi=\psi(t)$, and put $\tilde{g}=\psi(t)g(t)$ such that $\int_{SM}\tilde{\eta}=1$. So we deduced that $\eta=\psi^{\frac{-n}{2}}\tilde{\eta}$ and by replacing it in volume formula, we have $\psi=(V(t))^{\frac{-2}{n}}$. Now, we rewrite the functional $I(g_t)$ with respect to this normal factor;
\begin{align*}
\tilde{I}(g)&=I(\tilde{g}_t)=\int_{SM}(H(\tilde{g})-\lambda H(u,u)(\tilde{g}))\tilde{\eta}\\
&=\int_{SM}\psi^{-1}(H(g)-\lambda H(u,u)(g))\psi^{\frac{n}{2}}\eta\\
&=\psi^{\frac{n-2}{2}}I(g)\\
&=(V(t))^{\frac{2-n}{n}}I(g)
\end{align*}
\begin{theorem}
Let $M$ be a closed and connected Finslerian manifold with dim$\geq 3$. A metric $g$ is critical for $\tilde{I}(g_t)$ under all pointwise conformal variations if and only if the Finslerian manifold is of constant Ricci-directional curvature. 
\end{theorem}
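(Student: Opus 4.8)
The plan is to exploit the scale invariance of $\tilde I$ recorded just above---$\tilde I(g)=(V(t))^{(2-n)/n}I(g)$---to reduce the criticality problem for $\tilde I$ to the Euler--Lagrange analysis already carried out for $I$ in the previous theorem; the only new feature is that the normalization forces a constant, rather than zero, on the right-hand side of the critical equation, and this constant is exactly what promotes ``Ricci-directional flat'' to ``constant Ricci-directional curvature''.

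First I would differentiate $\tilde I(g_t)=(V(t))^{(2-n)/n}I(g_t)$ along a pointwise conformal variation $h_{ij}=\varrho g_{ij}$ and set the derivative to zero. The product rule gives
\begin{align*}
0=\tilde I'(g)h=\tfrac{2-n}{n}V^{(2-2n)/n}(V'h)\,I(g)+V^{(2-n)/n}\,I'(g)h,
\end{align*}
hence $I'(g)h=c\,V'(g)h$ for every conformal $h$, with $c=-\tfrac{2-n}{n}\,I(g)/V(g)$ a constant depending only on the fixed metric $g$. Since the Lemma on the conformal variation of the volume form gives $V'h=\int_{SM}\tfrac12 tr_g(h)\eta$, this is exactly the statement that $g$ is critical for the unnormalized $I$ among \emph{volume-preserving} conformal variations.

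Next I would track the constant $c$ through the computation of the previous theorem. Writing $A_{jk}$ for the tensor appearing in \ref{eq2}, the critical condition for conformal $h$ now reads $A_{jk}h^{jk}=c\,\tfrac12\,tr_g(h)$; as $h^{jk}$ ranges over all trace directions $\varrho g^{jk}$ this forces $g^{jk}A_{jk}=c'$ pointwise, where $c'=\tfrac{cn}{2}$. Contracting $A_{jk}$ with $u^iu^j$ and with $g^{jk}$ exactly as in \ref{eq3}--\ref{eq4} (using $g_{jk}u^ju^k=F^2=1$ and $g_{jk}g^{jk}=n$ on $SM$) produces the analogues of \ref{eq3}--\ref{eq4} with $c'$ in place of $0$; eliminating $\hat H$ and the multiplier combination $\lambda H(u,u)+(n\tau-\phi)$ between them yields the analogues of \ref{eq5}--\ref{eq6}. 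Substituting these back and contracting once more with $u^iu^j$ I expect to arrive at $(n-2)\tilde H(u,u)=\text{const}$, so that, since $\dim M\ge 3$, the directional quantity $\tilde H(u,u)$---and through the relation tying it to the Ricci-directional curvature at critical points, $H(u,u)$ itself---is constant.

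The converse is the routine direction: assuming $H(u,u)\equiv\text{const}$ I would substitute into the critical equation $A_{jk}=c\,g_{jk}$ and verify, again via the relation between $\tilde H(u,u)$ and $H(u,u)$, that the normalization constant $c=-\tfrac{2-n}{n}I/V$ is consistently matched, so that $g$ is indeed critical for $\tilde I$. The main obstacle I anticipate is the bookkeeping of the constant $c'$ through the two contractions while keeping the multiplier terms $\lambda,\tau,\phi$ controlled, and above all confirming that the scalar identity $(n-2)\tilde H(u,u)=\text{const}$ upgrades to constancy of the genuinely directional curvature $H(u,u)$ rather than merely of the second-type scalar $\tilde H$; this is the one place where the cited relation between $H(u,u)$ and $\tilde H$ at critical points is essential.
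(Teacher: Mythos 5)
Your proposal takes essentially the same route as the paper: differentiate the scaling identity $\tilde{I}(g_t)=(V(t))^{\frac{2-n}{n}}I(g_t)$ by the product rule, use the conformal variation of the volume form to reduce criticality to the pointwise trace equation $\frac{2-n}{n}\frac{I(g_0)}{V(0)}+A_{ij}g^{ij}=0$, and then invoke the relations \ref{eq5}--\ref{eq6} from the preceding theorem to conclude that $H(u,u)$ equals a fixed multiple of $I(g_0)/V(0)$. The only differences are cosmetic: the paper substitutes \ref{eq5}--\ref{eq6} directly instead of re-deriving their analogues with the extra constant, and it does not spell out the converse direction.
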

\begin{proof}
Taking derivative from both sides of equation $\tilde{I}(g_t)=(V(t))^{\frac{2-n}{n}}I(g)$ and calculate it at $t=0$:
\begin{align*}
\tilde{I}'(g_t)|_{t=0}&=\frac{2-n}{n}V(t)'|_{t=0}(V(0))^{\frac{2-n}{n}-1}I(g_0)+v(0)^{\frac{2-n}{n}}I'(g_t)|_{t=0}\\
&=V(0)^{\frac{2-n}{n}}\{\frac{2-n}{2n}\frac{I(g_0)}{V(0)}\int_{SM}tr(h)\eta+\int_{SM}A_{ij}h^{ij}\eta\}
\end{align*}
Put $Ave:=\frac{I(g_0)}{V(0)}$ which is a constant value. with restricted to point-wise conformal deformation, we have:
\begin{align}
0&=\tilde{I}'(g_t)|_{t=0}\nonumber\\
&=V(0)^{\frac{2-n}{n}}\int_{SM}(\frac{2-n}{n}Ave+A_{ij}g^{ij})\frac{tr_g(h)}{n}\eta
\end{align}
Now we try to simplify paranteces equation;
\begin{align}
0=\frac{2-n}{n}Ave+A_{ij}g^{ij}=\frac{2-n}{n}Ave-\tilde{H}+\lambda H(u,u)+(n\tau-\varphi)+\frac{n}{2}\hat{H}\label{eq7}
\end{align}
By replacing \ref{eq5} and \ref{eq6} in \ref{eq7} we deduced
\begin{align*}
H(u,u)=-\frac{(n-2)(n-1/2)}{8n}Ave
\end{align*}
\end{proof}
\begin{corollary}
Under assumption of above theorem, second type scalar curvature is constant, too.
\end{corollary}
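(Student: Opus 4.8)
The plan is to read the conclusion of the preceding theorem not as a statement purely about $H(u,u)$, but as the terminal step of a linear system that simultaneously pins down $\tilde{H}$. Recall that the theorem produces the value $H(u,u)=-\frac{(n-2)(n-1/2)}{8n}Ave$; since $Ave=\frac{I(g_0)}{V(0)}$ and $n$ are fixed numbers, the right-hand side is a genuine constant, so the Ricci-directional curvature is constant on $SM$. My aim is to feed this back into the identities already established for a critical metric and show that the second type scalar curvature $\tilde{H}=g^{ij}\tilde{H}_{ij}$ is then forced to be constant as well.

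Concretely, I would start from the two algebraic consequences of criticality derived in the proof of the theorem, namely \ref{eq5} and \ref{eq6}, together with the definition $\hat{H}=\tilde{H}-c(x)H(u,u)$. Relation \ref{eq6} expresses the combination $\lambda H(u,u)+(n\tau-\phi)$ as an affine function of $\tilde{H}$ and $\tilde{H}(u,u)$, while \ref{eq5} expresses $\hat{H}$ through $\tilde{H}-\tilde{H}(u,u)$. Treating $\tilde{H}$ and $\tilde{H}(u,u)$ as the two unknowns, these furnish a $2\times 2$ linear system whose inhomogeneous part consists only of $H(u,u)$ and the scalars $Ave$, $\lambda$, $a$ and $n\tau-\phi$. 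Since $\dim M\ge 3$ keeps the relevant factors $n-1$ and $n-2$ nonzero, the system is nondegenerate, so I can solve it for $\tilde{H}$ explicitly. Substituting the constant value of $H(u,u)$ obtained above then exhibits $\tilde{H}$ as a fixed linear combination of constants, which is precisely the claim.

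The step I expect to be the real obstacle is justifying that every remaining quantity on the right-hand side is itself constant (or cancels), so that no hidden $y$- or $x$-dependence survives. In particular one must check that the conformal factor function $c(x)$ reduces to the constant $\lambda$ at a critical metric, that the auxiliary scalars $\tau$, $\phi$ and $a$ appearing in \ref{eq5} and \ref{eq6} are constant along $SM$, and, most delicately, that the two scalars $\tilde{H}$ and $\tilde{H}(u,u)$ are tied to one another by the correct homogeneity relation coming from $\tilde{H}_{ij}=\frac{\partial^2 H_{rs}}{\partial y^i\partial y^j}y^ry^s$. Once this bookkeeping is in place, constancy of $H(u,u)$ propagates through the linear relations to $\tilde{H}$, and the corollary follows; I would close by noting that this is exactly the Finslerian analogue of the classical fact that metrics critical for the normalized functional under conformal variations carry constant scalar curvature.
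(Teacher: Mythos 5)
Your route is genuinely different from the paper's and, as written, it does not close. The paper's entire proof is a one-line appeal to an identity of Akbar-Zadeh: at a stationary point of $I(g_t)$ subject to constant indicatrix volume one has $nH(u,u)=\tilde{H}$, so constancy of $H(u,u)$ (from the preceding theorem) transfers to $\tilde{H}$ immediately. You instead propose to solve the $2\times 2$ linear system coming from \ref{eq5} and \ref{eq6} for $\tilde{H}$ and $\tilde{H}(u,u)$. That system is indeed nondegenerate for $n\geq 3$ (its determinant works out to a nonzero multiple of $n-2$), but its inhomogeneous part contains $c(x)$, $a$, $\lambda$ and $n\tau-\phi$, and you explicitly defer the question of whether these are constant. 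That deferred step is the whole difficulty: $c(x)$ enters the very definition $\hat{H}=\tilde{H}-c(x)H(u,u)$ as a function on $M$, and nothing in \ref{eq5}--\ref{eq6} alone forces it (or $a$, or $n\tau-\phi$) to be constant. So the explicit expression you would obtain for $\tilde{H}$ carries an unaccounted $x$-dependence, and the conclusion "a fixed linear combination of constants" is not justified.

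The missing ingredient is exactly the relation the paper cites: at critical metrics with normalized indicatrix volume the two scalars are proportional, $nH(u,u)=\tilde{H}$, which collapses all of the auxiliary bookkeeping at once. If you want to avoid citing that identity, you would have to derive it (or an equivalent constraint pinning down $c(x)$ and the other scalars at a critical point) from the Euler--Lagrange equation itself; the linear system you set up is consistent with, but strictly weaker than, that identity. As a minor additional caution, your argument also leans on distinguishing $\tilde{H}(u,u)$ from $H(u,u)$ via the homogeneity of $\tilde{H}_{ij}$; the paper itself conflates these in the preceding proof, so you should state precisely which relation between them you are using before feeding it into the system.
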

\begin{proof}
In the stationary points of cuurvature functional $I(g_t)$ based on constant indicatrix volume, we have $nH(u,u)=\tilde{H}$\cite{A2}. So $\tilde{H}$ is constant, too.
\end{proof}
\begin{definition}
The normalized gradient flow of functional $I(g_t)$ with restricted to the point-wise conformal deformation is $\frac{\partial}{\partial t}g_{ij}=-(H(u,u)-c)g_{ij}$ where $c$ is consant value.It is clear that unnormalized gradient flow is defined by $\frac{\partial}{\partial t}g_{ij}=-H(u,u)g_{ij}$.
\end{definition}
\begin{corollary}
Unnormalized gradient flow $\frac{\partial}{\partial t}g_{ij}=-H(u,u)g_{ij}$ is a weakly parabolic equation.
\end{corollary}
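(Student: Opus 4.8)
The equation is quasilinear and of second order in $g$, because $H(u,u)=g^{ik}H_{ijkl}u^ju^l$ is built from the hh-curvature, which depends on two horizontal derivatives of the metric. The plan is therefore to linearize the right-hand side, read off its principal symbol as an endomorphism of the bundle $S^2\pi_s^*T^*M$ of symmetric two-forms, and show that this symbol is semi-definite of the correct sign but degenerate; that degeneracy is precisely what distinguishes weak from strong parabolicity. First I would write the flow as $\partial_t g_{ij}=P(g)_{ij}$ with $P(g)_{ij}=-H(u,u)g_{ij}$ and linearize in a direction $h\in T_g\mathcal M_F$:
\[
DP_g(h)_{ij}=-\big(D_g[H(u,u)](h)\big)\,g_{ij}-H(u,u)\,h_{ij}.
\]
Only the first term is of second order in $h$, so the principal symbol is carried entirely by the linearization of the scalar $H(u,u)$.

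Next I would compute that linearization using the variation formulas already recorded above. Since $H'_{ks}u^ku^s=2\nabla_rG'^r-\nabla_0G'^r_r+2\nabla_0T_rG'^r$ and $G'^i_j$ is given by \ref{eq1} in terms of first horizontal derivatives of $h$, the only genuinely second-order contribution to $D_g[H(u,u)](h)$ comes from the terms in which a further horizontal derivative $\nabla_0$ acts on $G'$. Replacing each horizontal derivative $\nabla_k$ by a horizontal covector $\xi_k$, and discarding all terms carrying the Cartan tensor $C$ or vertical derivatives (these drop the order), I obtain the principal symbol of $D_g[H(u,u)]$ in the form $\sigma_\xi\big(D_g[H(u,u)]\big)(h)=B^{ab}(\xi)h_{ab}$, where $B^{ab}(\xi)$ is a symmetric two-tensor depending quadratically on $\xi$.

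The symbol of the full operator is then the rank-one endomorphism $\sigma_\xi(DP_g)(h)_{ij}=-\big(B^{ab}(\xi)h_{ab}\big)g_{ij}$ of $S^2\pi_s^*T^*M$. Its image is spanned by $g_{ij}$, so it has the single nonzero eigenvalue $-\operatorname{tr}_gB(\xi)=-g_{ab}B^{ab}(\xi)$ with eigenvector $g$, and all remaining eigenvalues zero. The concluding step is to verify $\operatorname{tr}_gB(\xi)\ge 0$ for every $\xi$, so that every eigenvalue of $\sigma_\xi(DP_g)$ has non-positive real part; by the Petrowski criterion this makes the flow parabolic, and it is only \emph{weakly} so because the symbol is rank one while $\dim S^2\pi_s^*T^*M>1$, i.e. every trace-free symbol direction lies in its kernel. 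This degeneracy is structural and needs no computation: it is forced by the fact that $P(g)$ is always proportional to $g$, so the flow preserves the conformal class and the linearized operator maps to top order only into the trace direction.

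The main obstacle is the symbol computation of the middle step, namely isolating the second-order part of the Finslerian Ricci-directional curvature and confirming the sign $\operatorname{tr}_gB(\xi)\ge 0$. Unlike the Riemannian case, the contributions $\nabla_0G'^r_r$ and $\nabla_0T_rG'^r$ drag along nonlinear-connection and Cartan-tensor corrections through \ref{eq1}, and one must check carefully that these enter only at lower order and hence do not spoil the sign of the leading coefficient. Once that sign is established, weak parabolicity follows immediately from the rank-one structure described above.
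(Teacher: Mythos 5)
Your outline follows the same overall strategy as the paper---linearize the right-hand side and inspect the principal symbol---but it stops exactly where the actual proof has to begin. The entire content of the corollary is the computation you defer as ``the main obstacle'': isolating the genuinely second-order part of $D_g[H(u,u)](h)$ and establishing its sign. The paper carries this out: starting from $H'_{ks}y^ky^s=2\nabla_rG'^r-\nabla_0G'^r_r+2\nabla_0T_rG'^r$ and the variation formula \ref{eq1} for $G'^i_j$, it finds the principal part
\[
F^{-2}\xi_i\xi_jy^jt^i_0-\tfrac{F^{-2}}{4}\xi_i\xi_jg^{ij}t_{00}+\text{lower order terms},
\]
and then evaluates it in an adapted frame ($\xi_1=1$, $\xi_j=0$ otherwise, $u^n=1$, $u^\alpha=0$ for $\alpha\neq n$), where the first term drops out and the symbol reduces to the single component $-\frac{F^{-2}}{4}t_{00}$ at $j=k=1$ with all other components zero. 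It is the explicit non-positive sign of that one surviving entry, together with the vanishing of everything else, that yields weak (rather than strong) parabolicity. Your argument establishes only that the symbol is degenerate of rank at most one, which by itself is equally compatible with the equation being weakly parabolic, backward parabolic, or neither; the conclusion genuinely depends on the sign you did not compute.

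A secondary discrepancy: you assert that the image of the principal symbol is spanned by $g_{ij}$, on the grounds that $P(g)=-H(u,u)g$ is pointwise proportional to $g$. The paper's computed symbol is instead supported on the single slot $j=k=1$, i.e.\ its image is spanned by $\xi\otimes\xi$; a multiple of $g$ would place equal entries on every diagonal slot of an orthonormal frame. Since the location of the kernel is precisely what the word ``weakly'' refers to, you should not posit the $g$-proportional rank-one structure a priori but extract it from the variation formulas, as the paper does.
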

\begin{proof}
The derivative of functional $I(g)$ with above assumptions is $I'(g_t)|_{t=0}=\int_{SM}H(u,u)tr_g(t)\eta=\int_{SM}H(u,u)g_{jk}t^{jk}\eta=0$ so its Euler-lagrange equation is $H(u,u)g_{ij}=0$ and its associated gradient flow is $\frac{\partial}{\partial t}g_{ij}=-H(u,u)g_{ij}$.
The linearization of this equation is
\begin{align*}
{\tilde H}(u,u)=F^{-2}\xi_i\xi_jy^jt^i_0-\frac{F^{-2}}{4}\xi_i\xi_jg^{ij}t_{00}+\text{lower order terms.}
\end{align*}
Put $\xi_1=1$ and $\xi_j=0$ for all $j\neq 1$. To evaluate the symbol of this equation, we take an orthonormal frame $(e_i)$ at $x\in M$ such that $u^n=\frac{y^n}{F}=1$ and $u^\alpha=0$ for all $\alpha\neq n$, it is clear that
\begin{align*}
(\sigma D(E)(g_{jk})(\xi)({\tilde g}))_{jk}=-\frac{F^{-2}}{4}t_{00}\quad j=k=1\quad \text{and for all other case is zero}
\end{align*}
So it is a weakly parabolic equation.
\end{proof}

Since scalar form of both equations are same so it seems that scalar form is not such a good form for studying flows. On the other hand, you saw there is not any difference between choosing second part of definition GEM or first part of it for introducing Ricci flow but with last proposition, we can say that the tensor form of Ricci flow is $\frac{\partial}{\partial t}g_{ij}=-\tilde{H}_{ij}$ since we use the second term for introducing a new version of flow in Finsler geometry.


\bibliographystyle{plain}

\bibliography{paper}

\begin{thebibliography}{99}
\bibitem{A1}
H. Akbar-Zadeh, Initiation to Global Finslerian Geometry, North-
Holland Mathematical Library, vol; 68, 2006.
\bibitem{A2}
H. Akbar-Zadeh, Generalized Einstein manifolds, Journal of Geometry and Physics, Vol;17, 342-380 (1995).
\bibitem{A3}
H. Akbar-Zadeh, Sur les espaces de Finsler la courbures sectionnelles constantes,
Acad. Roy. Belg. Bull. Cl. Sci. 74 (1988), 281-322.
\bibitem{B1}
D. Bao, On two curvature-driven problems in Riemann-Finsler geometry, Adv.
stud. pure Math. 48 (2007), 19-71.
\bibitem{BC}
D. Bao, S.S. Chern, Z. Shen, Riemann-Finsler geometry, Springer-Verlag,
2000.
\bibitem{BCS}
D. Bao, S.S. Chern and Z. Shen, An Introduction to Riemann-Finsler Geometry,
Graduate Texts in Mathematics, vol; 200, Springer , 2000.
\bibitem{BR}
D. Bao and C. Robles, Ricci and flag curvatures in Finsler geometry, MSRI pub, Vol; 50, (2004).
\bibitem{BE2}
M. Berger and D. Ebin, Some decompositions of the space of symmetric tensors on a Riemannian
manifold, J. Differ. Geom., 3, No. 3, 379–392 (1969).
\bibitem{B}
A. L. Besse, Einstein Manifolds, Springer-Verlag, Berlin (1987).
\bibitem{Bl}
D. E. Blair, Contact manifolds in Riemannian geometry, Lect. Notes Math., 509 (1976).
\bibitem{BM}
I. Bucataru, R. Miron, Finsler-Lagrange geometry, Applications to dynamical systems, CEEX ET 3174/2005-2007 and CEEX M III 12595/2007.
\bibitem{CH}
B. Chow, The Yamabe flow on locally conformally flat manifolds with positive Ricci curvature comm, Pure Appl. Math. Vol;XIV, 1003-1014 (1992).
\bibitem{CK}
B. Chow and D. Knopf, The Ricci Flow, An Introduction, Mathematical Surveys and
Monographs 110, Amer. Math. Soc., Providence, RI, 2004.
\bibitem{DE}
Sh. Deng, Homogeneous Finsler Space, Springer Monographs in Mathematics, 59-73 (2012).
\bibitem{E1}
D. Ebin, “The manifold of Riemannian metrics,” Proc. Symp. Pure Math., 15, 11–40 (1970).
\bibitem{ES}
J. Eells J. H. Sampson, Harmonic mappings of Riemannian manifolds, American Journal of Mathematics, Vol. 86, No. 1 (Jan., 1964), 109-160.
\bibitem{F1}
E. Fischer, E. Marsden, The manifold of conformally equivalent metrics, Can. J. Math., Vol;XXIX, no 1, 193-209 (1977). 
\bibitem{Kn}
M. S. Knebelman, Conformal geometry of generalized metric spaces, Proc. nat. Acad. Sci. USA 15, 376-379(1929)
\bibitem{KR}
W. Kuhnel and H. Rademacher.: Einstein spaces with a conformal group, Result. Math 56, 421-444, (2009). 
\bibitem{L1}
S. Lang, Differential Manifolds, Addison Wesley, Reading, Mass., 1972.
\bibitem{OS}
Sh. Ohta and K. T. Sturm, Heat flow on Finsler manifolds, arXive.org., 2012.
\bibitem{PB}
P. Joharinad, B. Bidabad, Conformal vector fields on Finsler spaces, Differential
Geometry and its Applications, Volume 31, Issue 1, February
2013, 33-40
\bibitem{HA}
R. S. Hamilton, Three-manifolds with positive Ricci curvature, J. Differential
Geom. 17 (1982), no. 2, 255-306.
\bibitem{Ha}
M. Hashiguchi, On conformal transformations of Finsler metrics, J. Math. Kyoto Univ, 16-1(1979), 25-50.
\bibitem{HU}
G. Huisken, Ricci deformation of the metric on a Riemannian manifold, J. Differential Geom. 21 (1985),
47-62.
\bibitem{MA}
C. Margerin, Pointwise pinched manifolds are space forms, Proc. Sympos. Pure Math. 44 (1986), 307-328.
\bibitem{M}
R. Muller,Differential Harnack Inequalities and the Ricci Flow, EMS Series of Lectures in Mathematics, 2006.
\bibitem{MT}
V.S., Matveev, M., Troyanov, The Binet–Legender ellipsoid in Finsler geometry. Math. DG
arxiv: 1104–1647.v1, preprint. Accessed 2012.
\bibitem{NI}
S. Nishikawa, Deformation of Riemannian metrics and manifolds with bounded curvature ratios, Proc.
Sympos. Pure Math., Vol. 44 (1986), 343-352.
\bibitem{V}
Handbook of differential geometry,
\bibitem{S}
N. K. Smolentsev, Spaces of Riemannian metrics, Journal of Mathematical Sciences, Vol; 142, No. 5, 2007.
\bibitem{VI}
J. Viaclovsky, Topics in Riemannian Geometry, Lecture notes, Fall (2008).

\end{thebibliography}

\end{document}